\newtheorem{theorem}{Theorem}[section]
\newtheorem{lemma}[theorem]{Lemma}
\newtheorem{remark}[theorem]{Remark}
\numberwithin{equation}{section}
\title{Weierstrass-Enneper representation for maximal surfaces in hodographic coordinates}
\author{Rahul Kumar Singh}
\address{Harish-Chandra Research Institue, HBNI, Chhatnag Road, Jhunsi, Allahabad-211019, India\\}
\email{rhlsngh498@gmail.com; rahulkumar@hri.res.in}
\subjclass[2010]{53B30, 53B50, 35-XX}
\keywords{pde, maximal surface, Weierstrass-Enneper representation}
\begin{document}

\maketitle

\begin{abstract}
We obtain the Weierstrass-Enneper representation for maximal graphs (whose Gauss map is one-one) by directly computing the isothermal coordinates on it in Lorentz-Minkowski space. For this we use the method of Barbishov and Chernikov, which they used to find the solutions of Born-Infeld equation in hodographic coordinates. We could use their method in our case, because we realized that the maximal surface equation and Born-Infeld equation are related via a wick rotation in the first variable of the parametrising domain. 
\end{abstract}

\section{Introduction}
A maximal surface in Lorentz-Minkowski space $\mathbb{L}^3:=(\mathbb{R}^3, dx^2+dy^2-dz^2)$ is a spacelike surface whose mean curvature is zero everywhere. Any spacelike surface in $\mathbb{L}^3$ can be expressed locally as a graph $(x,y,\varphi(x,y))$ of some smooth function $\varphi$ which satisfies $\varphi_{x}^2+\varphi_{y}^2<1.$ Then any graph in $\mathbb{L}^3$ is maximal if $\varphi$ satisfies the following equation \cite{kobasingu}
\begin{equation}\label{maxse}
(1-\varphi_{x}^2)\varphi_{yy}+2\varphi_{x}\varphi_{y}\varphi_{xy}+(1-\varphi_{y}^2)\varphi_{xx}=0.
\end{equation} 
These surfaces, more generally, constant mean curvature surfaces are important in classical relativity\cite{marsden}. Similar to the case of minimal surfaces in $\mathbb{R}^3$ \cite{osser}, there exists a Weierstrass-Enneper formula for maximal surfaces in $\mathbb{L}^3$ \cite{kobamax}.
In fact, any maximal surface in $\mathbb{L}^3$ is represented as \cite{kobamax}
\begin{center}
$\Psi(\tau)=\Re\int(f(1+g^{2}), if(1-g^{2}), -2fg)d\tau, \tau\in D, D\subseteq \mathbb{C} $
\end{center}
$f$ is a holomorphic function on $ D $, $ g $ is a meromorphic function on $D$, $fg^{2}$ is holomorphic on $D$ and $\vert g(\tau)\vert\neq 1$ for $ \tau\in D $.\\ For a spacelike surface in $ \mathbb{L}^3 $, the Gauss map $G$ is defined as a map which assigns to a point of the surface $S$, the unit normal vector at that point. Therefore one can regard $ G:S\longrightarrow \mathbb{H}^2 $, $\mathbb{H}^2$ is a spacelike surface which has constant negative curvature $-1$ with respect to the induced metric.  \\
We can define a stereographic map $\sigma$ for $ \mathbb{H}^2 $ as
\begin{center}
$\sigma:\mathbb{C}\cup\{\infty\}-{\{\vert\tau\vert=1}\}\longrightarrow  \mathbb{H}^2$ by \\
\end{center}
 $$\sigma(\tau)=\left(\frac{-2\Re(\tau)}{\vert\tau\vert^2-1},\frac{-2\Im(\tau)}{\vert\tau\vert^2-1},\frac{\vert\tau\vert^2+1}{\vert\tau\vert^2-1}\right)$$ and $\sigma(\infty)=(0,0,1)$.

Since any maximal surface can be given isothermal coordinates one can think of $G$ as a map $G:D\subset \mathbb{C}\longrightarrow \mathbb{H}^2$, the Gauss map G is given by $G(\tau)=\sigma(g(\tau))$.\\
Next if we assume that the Gauss map for a maximal surface is one-one, then from above expression for $G$ we deduce that $g$ is one-one.
\\
Now set $ \zeta=g \Rightarrow\dfrac{d\zeta}{d\tau}=\dfrac{dg}{d\tau}\Rightarrow d\zeta=dg $.
Define $M(\zeta)=\dfrac{f}{\dfrac{dg}{d\tau}}=f\dfrac{d\tau}{dg}=f\dfrac{d\tau}{d\zeta}$.\\
Then $M(\zeta)d\zeta=fd\tau$. Hence in this new variable $\zeta$ we can write Weierstrass-Enneper representation for a maximal surface using just one meromorphic function $M(\zeta)$\cite{kobasingu}.
\begin{align}\label{we}
\Psi(\zeta)=\Re\int(M(\zeta)(1+\zeta^{2}), iM(\zeta)(1-\zeta^{2}), -2M(\zeta)\zeta)d\zeta
\end{align}

The induced metric can be given in terms of the meromorphic function $ M $ as \cite{kobamax}
\begin{align}\label{metric}
 ds^2=\left(\dfrac{\vert M(\zeta) \vert(1-\vert \zeta \vert^2)}{2}\right)^2|d\zeta|^2. 
\end{align}\\
\begin{remark}
It has been known that the Born-Infeld equation
\begin{align}\label{bie}
(1+\varphi_{x}^2)\varphi_{yy}-2\varphi_{x}\varphi_{y}\varphi_{xy}-(1-\varphi_{y}^2)\varphi_{xx}=0.
\end{align}
is related to the minimal surface equation
\begin{align}\label{minse}
(1+\varphi_{x}^2)\varphi_{yy}-2\varphi_{x}\varphi_{y}\varphi_{xy}+(1+\varphi_{y}^2)\varphi_{xx}=0.
\end{align}
via a wick rotation in second variable $ `y' $, i.e., if we replace $ y $ by $ iy $ in \eqref{bie} we get \eqref{minse}.\\
This fact has been exploited by Dey \cite{rukmini} in deriving Weirstrass- Enneper representation for minimal surfaces in Euclidean space, using Barbishov and Chernikov's technique\cite{barbi}. Whitham also explains this technique in his book \cite{whitham}.
\end{remark}
\begin{remark}
Here we observe that if instead of second variable, if we make a wick rotation in first variable $ `x' $, i.e., replacing $ x $ by $ ix $ in \eqref{bie} we get the maximal surface equation \eqref{maxse} and vice-versa. We use this fact to obtain the general solution to the maximal surface equation, namely its Weierstrass-Enneper representation.
\end{remark}

The paper is organised as follows: In section $ 2 $, we  derive the general solution of maximal surface equation by directly computing isothermal coordinates (namely $(\zeta_1, \zeta_2)$ where $ \zeta=\zeta_1+i\zeta_2 $) on it. As a consequence we get the single Weierstrass data which determines the maximal surface. In section $ 3 $, by using the Weierstrass data we write Weierstrass-Enneper representation in a new way. In section $ 4 $, we give some examples of maximal graphs in this new coordinate system illustrating the method we used to obtain the general solution.
Finally, in section $ 5 $ we construct a one parameter family of isometric maximal surfaces.

\section{Weierstrass-Enneper Representation}

We begin with the complex coordinates
\begin{align}\label{co}
\tilde{\xi}=i(x-iy)=i\bar{z} ~~~\text{,}~~~ \tilde{\eta}=i(x+iy)=iz ~~~\text{,}~~~
 \varphi_{i\bar{z}}=\tilde{u}=\frac{1}{i}\varphi_{\bar{z}} ~~~\text{,}~~~ \varphi_{iz}=\tilde{v}=\frac{1}{i}\varphi_z
\end{align}

\begin{center}
$\xi=x-iy=\bar{z} ~~~\text{,}~~~ \eta=x+iy=z ~~~\text{,}~~~
\varphi_{\bar{z}}=u ~~~\text{and}~~~ \varphi_{z}=v$.
\end{center}

The partial differentials in this new coordinates $ (\tilde{\xi},\tilde{\eta})$
is related to partial differentials in the old coordinates $(x,y)$ by the following relations
\begin{center}
$\varphi_{x}=i(\tilde{u}+\tilde{v}) ~~~\text{,}~~~ \varphi_{y}=\tilde{u}-\tilde{v}$~~~,~~~
$\varphi_{xx}=-(\tilde{u}_{\tilde{\xi}}+2\tilde{v}_{\tilde{\xi}}+\tilde{v}_{\tilde{\eta}})$~~~,~~~
$\varphi_{xy}=i(\tilde{u}_{\tilde{\xi}}-\tilde{v}_{\tilde{\eta}})  $~~~and~~~
\end{center}
$\varphi_{yy}=(\tilde{u}_{\tilde{\xi}}-2\tilde{v}_{\tilde{\xi}}+\tilde{v}_{\tilde{\eta}})$.\\
These identities reduces maximal surface equation \eqref{maxse} to
\begin{equation}\label{mse}
\tilde{v}^2\tilde{u}_{\tilde{\xi}}-(1+2\tilde{u}\tilde{v})\tilde{u}_{\tilde{\eta}}
+\tilde{u}^2\tilde{v}_{\tilde{\eta}}=0
~~~\text{and}~~~
\tilde{u}_{\tilde{\eta}}=\tilde{v}_{\tilde{\xi}}.
\end{equation}
Next we interchange the role of independent and dependent variables, i.e.,  \begin{center}$(\tilde{u},\tilde{v})\leftrightarrow(\tilde{\xi},\tilde{\eta})$.

\end{center}
Since we consider only those maximal graphs whose Gauss map is one-one, the Gaussian curvature $ K\neq0 $. Also $ 1-\varphi_x^2-\varphi_y^2\neq0 $ as $ \varphi_x^2+\varphi_y^2<1$. Therefore, 
\begin{center}
$ J=\tilde{u}_{\tilde{\xi}}\tilde{v}_{\tilde{\eta}}-\tilde{u}_{\tilde{\eta}}\tilde{v}_{\tilde{\xi}}=\dfrac{1}{4}(\varphi_{xy}^2-\varphi_{xx}\varphi_{yy})=\dfrac{K(1-\varphi_x^2-\varphi_y^2)^2}{4}\neq0 $.
\end{center}
Thus
\begin{center}
$ \tilde{v}_{\tilde{\eta}}=J\tilde{\xi}_{\tilde{u}} $~~~,~~~
$\tilde{v}_{\tilde{\xi}}=-J\tilde{\eta}_{\tilde{u}}$~~~,~~~
$ \tilde{u}_{\tilde{\eta}}=-J\tilde{\xi}_{\tilde{v}} $~~~,~~~
$\tilde{u}_{\tilde{\xi}}=J\tilde{\eta}_{\tilde{v}}$
\end{center}
 reduces equation \eqref{mse} to
\begin{equation}\label{mse1}
\tilde{v}^2\tilde{\eta}_{\tilde{v}}+(1+2\tilde{u}\tilde{v})\tilde{\xi}_{\tilde{v}}
+\tilde{u}^2\tilde{\xi}_{\tilde{u}}=0
~~~\text{and}~~~
\tilde{\xi}_{\tilde{v}}=\tilde{\eta}_{\tilde{u}}.
\end{equation}

Now if we use relations \eqref{co} in \eqref{mse1}, we get
\begin{equation}\label{mse3}
z_{u}-\bar{z}_{v}=0
~~~\text{and}~~~
v^2z_{v}-(1-2uv)z_{u}+u^2z_{v}=0
\end{equation}
or,
\begin{equation}\label{mse2}
\eta_{u}-\xi_{v}=0
~~~\text{and}~~~
v^2\eta_{v}-(1-2uv)\eta_{u}+u^2\eta_{v}=0.
\end{equation}
By differentiating equation \eqref{mse2} w.r.t $u$, we get a second order quasilinear pde
\begin{center}
$v^2\xi_{vv}-(1-2uv)\xi_{uv}+u^2\xi_{uu}=-2u\xi_{u}-2v\xi_{v}$
\end{center}
Now assuming that the solutions which we want to find is in hyperbolic regime, we find the characteristics for the above equation, they are integral curves of the following differential form
\begin{center}
$u^2dv^2+(1-2uv)dudv+v^2du^2=0$.
\end{center} 
 Characteristic curves are
 \begin{center}
$ \dfrac{1-\sqrt{1-4uv}}{2u}=c~~~,~~~\dfrac{1-\sqrt{1-4uv}}{2v}=c'$
 \end{center}
Now if we introduce
\begin{center}
$ \zeta=\dfrac{1-\sqrt{1-4uv}}{2v}~~~,~~~\bar{\zeta}=\dfrac{1-\sqrt{1-4uv}}{2u}$
\end{center} as new variables to replace $u$ and $v$, we get
\begin{align}\label{uv}
u=\dfrac{\zeta}{1+\zeta\bar{\zeta}}~~~,~~~v=\dfrac{\bar{\zeta}}{1+\zeta\bar{\zeta}}
\end{align}

\begin{lemma}
Equations \eqref{mse3} is equivalent to a single equation
\begin{center}\label{zeta}
$\zeta^2\bar{z}_{\zeta}-z_{\zeta}=0$.
\end{center}
\end{lemma}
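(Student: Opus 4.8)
The plan is to substitute the hodographic change of variables \eqref{uv} directly into the pair of equations \eqref{mse3} and to check that together they are equivalent to $\zeta^{2}\bar z_{\zeta}-z_{\zeta}=0$.

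First I would set up the chain rule between $(u,v)$ and $(\zeta,\bar\zeta)$. Differentiating \eqref{uv} gives $u_{\zeta}=v_{\bar\zeta}=(1+\zeta\bar\zeta)^{-2}$, $u_{\bar\zeta}=-\zeta^{2}(1+\zeta\bar\zeta)^{-2}$ and $v_{\zeta}=-\bar\zeta^{2}(1+\zeta\bar\zeta)^{-2}$, so the Jacobian determinant of $(\zeta,\bar\zeta)\mapsto(u,v)$ equals $(1-\zeta^{2}\bar\zeta^{2})(1+\zeta\bar\zeta)^{-4}$, which is nonzero exactly when $|\zeta|\neq 1$ --- the same condition that makes the induced metric \eqref{metric} non-degenerate. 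Inverting, for any smooth function $F$ one obtains
\begin{align*}
F_{u}=\frac{1+\zeta\bar\zeta}{1-\zeta\bar\zeta}\bigl(F_{\zeta}+\bar\zeta^{2}F_{\bar\zeta}\bigr),\qquad
F_{v}=\frac{1+\zeta\bar\zeta}{1-\zeta\bar\zeta}\bigl(\zeta^{2}F_{\zeta}+F_{\bar\zeta}\bigr),
\end{align*}
and from \eqref{uv} one also records the identities $u=\zeta/(1+\zeta\bar\zeta)$, $v=\bar\zeta/(1+\zeta\bar\zeta)$ and $1-2uv=(1+\zeta^{2}\bar\zeta^{2})/(1+\zeta\bar\zeta)^{2}$.

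Next I would apply these formulas to both $z$ and $\bar z$ and feed them into \eqref{mse3}. Set $A:=\zeta^{2}\bar z_{\zeta}-z_{\zeta}$ and $B:=\bar\zeta^{2}z_{\bar\zeta}-\bar z_{\bar\zeta}$; note that $B$ is the complex conjugate of $A$, because $\varphi$ is real, so $u,v$ (hence $\zeta,\bar\zeta$) and $z,\bar z$ are genuine conjugate pairs. I expect the first equation $z_{u}-\bar z_{v}=0$ of \eqref{mse3} to reduce, after dividing out a common nonvanishing scalar, to $B-A=0$; and the second equation to reduce, similarly, to a relation of the form $B=\zeta^{2}\bar\zeta^{2}A$ (equivalently $A=\zeta^{2}\bar\zeta^{2}B$). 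Combining the two then gives $(1-\zeta^{2}\bar\zeta^{2})A=0$, whence $A=0$ since $|\zeta|\neq1$; conversely, $A=0$ forces $B=\overline A=0$, so both equations of \eqref{mse3} are satisfied. This establishes the equivalence.

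The one genuinely computational step is extracting the reduced form of the second equation of \eqref{mse3}: substituting the chain-rule expressions for $z_{u},z_{v},\bar z_{u},\bar z_{v}$ together with the rational expressions for $u^{2}$, $v^{2}$ and $1-2uv$ yields a sum of about eight terms, and one must run the cancellation --- conveniently organised by writing $1+\zeta^{2}\bar\zeta^{2}=\zeta^{2}\bar\zeta^{2}+1$ so that the middle coefficient splits across the other two groups --- down to the surviving two-term relation. The reduction of the first equation and the final piece of algebra are then immediate.
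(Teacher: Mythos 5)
Your proposal is correct and follows essentially the same route as the paper: the hodographic substitution \eqref{uv} together with the chain rule between $(u,v)$ and $(\zeta,\bar{\zeta})$, the only differences being that you run the chain rule in the inverse direction (hence the explicit Jacobian and the condition $|\zeta|\neq 1$) and that you spell out the converse step, via $B=\overline{A}$ and $1-\zeta^{2}\bar{\zeta}^{2}\neq 0$, which the paper compresses into a single displayed identity. Note only that your reduction of the second equation presupposes its intended form $v^{2}z_{v}-(1-2uv)\bar{z}_{v}+u^{2}\bar{z}_{u}=0$ (the printed term $u^{2}z_{v}$ in \eqref{mse3} is evidently a typo, since as written the equation would not involve $\bar{z}$ at all), which is likewise what the paper's own computation uses.
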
 
\begin{proof}
Since $u=\dfrac{\zeta}{1+\zeta\bar{\zeta}}~~~,~~~v=\dfrac{\bar{\zeta}}{1+\zeta\bar{\zeta}}$
we get \begin{center}
$ z_{\zeta}=\dfrac{z_{u}-\bar{\zeta}^2z_{v}}{(1+\zeta\bar{\zeta})^2} ~~~,~~~
 \bar{z}_{\zeta}=\dfrac{\bar{z}_{u}-\bar{\zeta}^2\bar{z}_{v}}{(1+\zeta\bar{\zeta})^2}$
   
\end{center}
Using above values of $ z_{\zeta}$ and $ \bar{z}_{\zeta}   $, we get
\begin{center}
 $\zeta^2\bar{z}_{\zeta}-z_{\zeta}=v^2z_{v}-(1-2uv)z_{u}+u^2z_{v}$,
 \end{center}
this shows \eqref{mse3} is equivalent to
\begin{center}
$\zeta^2\bar{z}_{\zeta}-z_{\zeta}=0$.
\end{center}

\end{proof}
\begin{theorem}\label{thm}
Any maximal surface whose Gauss map is one-one will have  a local Weierstrass-Enneper type representation of the following form
\begin{center}
$x(\zeta)=x_{0}+ \Re(\int^\zeta M(\omega)(1+\omega^{2})d\omega)$

\end{center}
\begin{center}
$ y(\zeta)=y_{0}+ \Re(\int^\zeta iM(\omega)(1-\omega^{2})d\omega)$
\end{center}
\begin{center}
$\varphi(\zeta)=\varphi_{0}+ \Re(\int^\zeta 2M(\omega)\omega d\omega)$
\end{center}
where $ M(\zeta) $ is,  a meromorphic function, known as the Weierstrass data.
\end{theorem}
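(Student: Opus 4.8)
The plan is to take the single equation $\zeta^{2}\bar z_{\zeta}-z_{\zeta}=0$ supplied by the preceding Lemma, upgrade it to the statement that $\bar z_{\zeta}$ is holomorphic in $\zeta$, and then recover the three coordinate functions by integrating exact real $1$-forms. First I would record the companion identity for the height. From the defining relations $\varphi_{\bar z}=u$, $\varphi_{z}=v$ we have the $1$-form identity $d\varphi=v\,dz+u\,d\bar z$, hence $\varphi_{\zeta}=v\,z_{\zeta}+u\,\bar z_{\zeta}$; substituting $z_{\zeta}=\zeta^{2}\bar z_{\zeta}$ and the expressions \eqref{uv} for $u,v$ gives $\varphi_{\zeta}=(v\zeta^{2}+u)\,\bar z_{\zeta}=\zeta\,\bar z_{\zeta}$, since $v\zeta^{2}+u=(\bar\zeta\zeta^{2}+\zeta)/(1+\zeta\bar\zeta)=\zeta$. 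Writing $M(\zeta):=\bar z_{\zeta}$, all three functions are then carried by $M$: $z_{\zeta}=\zeta^{2}M$, $\bar z_{\zeta}=M$, $\varphi_{\zeta}=\zeta M$.

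The crucial step is to show $M_{\bar\zeta}=0$. Differentiating $z_{\zeta}=\zeta^{2}\bar z_{\zeta}$ in $\bar\zeta$ and using $\bar z_{\zeta\bar\zeta}=\overline{z_{\zeta\bar\zeta}}$ gives $z_{\zeta\bar\zeta}=\zeta^{2}\,\overline{z_{\zeta\bar\zeta}}$, hence $|z_{\zeta\bar\zeta}|=|\zeta|^{2}\,|z_{\zeta\bar\zeta}|$. Now I would invoke spacelikeness: a direct computation from \eqref{uv} gives $\varphi_{x}^{2}+\varphi_{y}^{2}=4|u|^{2}=4|\zeta|^{2}/(1+|\zeta|^{2})^{2}$, so the condition $\varphi_{x}^{2}+\varphi_{y}^{2}<1$ is exactly $|\zeta|^{2}\neq 1$. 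Therefore $(1-|\zeta|^{2})\,|z_{\zeta\bar\zeta}|=0$ forces $z_{\zeta\bar\zeta}=0$; thus $z$ is harmonic in $\zeta$ (and, via $\varphi_{\zeta}=\zeta M$, so is $\varphi$), and $M_{\bar\zeta}=\bar z_{\zeta\bar\zeta}=\overline{z_{\zeta\bar\zeta}}=0$, so $M$ is holomorphic on the domain, with at worst isolated poles coming from the hodographic chart, i.e. meromorphic.

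It remains to assemble the representation. With $x=\tfrac12(z+\bar z)$, $y=\tfrac1{2i}(z-\bar z)$ and the formulas for $z_{\zeta},\bar z_{\zeta},\varphi_{\zeta}$ above, one gets $2x_{\zeta}=(1+\zeta^{2})M$, $2y_{\zeta}=i(1-\zeta^{2})M$, $2\varphi_{\zeta}=2\zeta M$. Since $x,y,\varphi$ are real, $x_{\bar\zeta}=\overline{x_{\zeta}}$ and so $dx=x_{\zeta}\,d\zeta+\overline{x_{\zeta}}\,d\bar\zeta=\Re\big((1+\zeta^{2})M\,d\zeta\big)$, and similarly $dy=\Re\big(i(1-\zeta^{2})M\,d\zeta\big)$, $d\varphi=\Re\big(2\zeta M\,d\zeta\big)$. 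These $1$-forms are exact (they are $dx$, $dy$, $d\varphi$), so integration from a fixed base point $\zeta_{0}$ at which the coordinates take the values $x_{0},y_{0},\varphi_{0}$ is path-independent and yields precisely the three displayed formulas; holomorphy of $M$ makes each $\int^{\zeta}$ a genuine contour integral of a meromorphic integrand.

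I expect the main obstacle to be the harmonicity step: extracting $z_{\zeta\bar\zeta}=0$ from the identity $|z_{\zeta\bar\zeta}|=|\zeta|^{2}|z_{\zeta\bar\zeta}|$, which is exactly the place where the spacelike hypothesis enters, in the disguised form $|\zeta|\neq 1$. A secondary point requiring care is that the whole construction — the hodographic interchange $(\tilde u,\tilde v)\leftrightarrow(\tilde\xi,\tilde\eta)$ and the passage to the characteristic coordinate $\zeta$ — is valid only locally, on a neighbourhood where the Jacobian $J\neq 0$; this is precisely where the one-one Gauss map assumption is used, and it is why the statement is local.
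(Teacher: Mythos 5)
Your proposal is correct and follows essentially the same route as the paper: differentiate the Lemma's equation $\zeta^{2}\bar z_{\zeta}-z_{\zeta}=0$ in $\bar\zeta$, combine with its conjugate to get $z_{\zeta\bar\zeta}=0$, identify the Weierstrass data ($M=\bar z_{\zeta}$, which is the paper's $F'$), compute $\varphi_{\zeta}=(u+v\zeta^{2})M=\zeta M$, and integrate real parts. The only differences are cosmetic streamlinings — you bypass the explicit decomposition $\bar z=\bar z_{0}+F(\zeta)+H(\bar\zeta)$ and you make explicit the use of spacelikeness ($|\zeta|\neq 1$) in extracting harmonicity, a step the paper leaves implicit.
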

\begin{remark}
Observe that $ \varphi\rightarrow -\varphi $ is a symmetry of the equation \eqref{maxse} if one keeps $ x $ and $ y $ invariant. Thus   $\varphi(\zeta)=\tilde{\varphi}_{0}+ \Re(\int^\zeta -2M(\omega)\omega d\omega)$ is also an acceptable representation.
\end{remark}

\begin{proof}
Since any maximal surface is locally a graph,
from above lemma, we have $\zeta^2\bar{z}_{\zeta}-z_{\zeta}=0$. By differentiating \eqref{zeta} with respect to $ \bar{\zeta} $, we obtain 
 \begin{equation}\label{zeta1}
 {\zeta}^2\bar{z}_{\zeta\bar{\zeta}}-z_{\zeta\bar{\zeta}}=0.
 \end{equation}
 Now use \eqref{zeta1} and its conjugate to obtain
 \begin{align*}
  \bar{z}_{\zeta\bar{\zeta}}=0 \Rightarrow \bar{z}=\bar{z_{0}}+F({\zeta})+H(\bar{\zeta}). 
 \end{align*}
 Then \begin{align}\label{zF}
 z=z_{0}+\overline{F({\zeta})}+\overline{H(\bar{\zeta})}.
 \end{align}
 
  Lemma also imply 
  $$\overline{H(\bar{\zeta})}=\int^{\zeta}\omega^2F'(\omega)d\omega$$.
  
  Thus, 
 \begin{equation*}
  \bar{z}=\bar{z_{0}}+F({\zeta})+\int^{{\zeta}}\bar{\omega}^2
  \overline{F'(\omega)}d\bar{\omega}.
  \end{equation*}
  Next we have 
 \begin{center}
  $\varphi_{\zeta}=\varphi_{i\bar{z}}(i\bar{z})_{\zeta}+\varphi_{iz}(iz)_{\zeta}=(u+v\zeta^2)F'(\zeta)=\zeta F'(\zeta)$.
  \end{center}
  Similarly,
  \begin{center}
  $\varphi_{\bar{\zeta}}=\bar{\zeta}\dfrac{d}{d\bar{\zeta}}(\overline{F({\zeta})})$
\end{center}
Hence 
\begin{equation}\label{phiF}
\varphi=\varphi_{0} + \int^ {\zeta} \omega F'(\omega) d\omega +\int^ {\bar{\zeta}} \bar{\omega}\frac{d}{d\bar{\omega}}(\overline{F({\omega})}) d\bar{\omega}.
\end{equation}
Let $F'(\omega)=M(\omega)$.
By expanding $z$ into its real and imaginary parts, also using $z+\bar{z}=2\Re(z)$ we get
\begin{center}
$x(\zeta)=x_{0}+\Re(\int^{\zeta} M(\omega)(1+\omega^{2})d\omega)$
\end{center}
\begin{center}
$ y(\zeta)=y_{0}+ \Re(\int^{\zeta} iM(\omega)(1-\omega^{2})d\omega)$
\end{center}
\begin{center}
$\varphi(\zeta)=\varphi_{0} + \Re(\int^{\zeta} 2M(\omega)\omega d\omega)$.
\end{center}

\end{proof}

\section{Hodographic coordinates}
If $ F'(\zeta)\neq 0 $. Then let $ H(\bar{\zeta})=\overline{F(\zeta)}=\bar{\rho} $ and $ F({\zeta})={\rho} $, so that we can regard $ \rho $ and $ \bar{\rho} $ as new variables, at least locally. Now in this new coordinate system $ \rho $, the Weiersrtass-Enneper representation attains the following form
\begin{equation}
x(\rho)=\frac{\rho+\bar{\rho}}{2}+\frac{1}{2}\left(\int(F^{-1}(\rho))^2d\rho+\int(H^{-1}(\bar{\rho}))^2d\bar{\rho}\right)
\end{equation}
\begin{equation}
y(\rho)=\frac{\bar{\rho}-\rho}{2i}+\frac{1}{2i}\left(\int(F^{-1}(\rho))^2d\rho-\int(H^{-1}(\bar{\rho}))^2d\bar{\rho}\right)
\end{equation}
\begin{equation}
\varphi(\rho)=\int F^{-1}(\rho)d\rho+\int H^{-1}(\bar{\rho})d\bar{\rho}
\end{equation}
From $(3.3)$ we have 
\begin{align}\label{hodo}
 \varphi_{\rho}=F^{-1}(\rho)=\zeta  ~~~\text{and}~~~ \varphi_{\bar{\rho}}=H^{-1}(\bar{\rho})=\bar{\zeta}
\end{align}
Now in terms of $\varphi_{\rho}$ and $\varphi_{\bar{\rho}}$ equations $(3.1)$ to $(3.3)$ reduces to
\begin{equation}
x(\rho)=\frac{\rho+\bar{\rho}}{2}+\frac{1}{2}\left(\int(\varphi_{\rho})^2d\rho+\int(\varphi_{\bar{\rho}})^2d\bar{\rho}\right)
\end{equation}
\begin{equation}
y(\rho)=\frac{\bar{\rho}-\rho}{2i}+\frac{1}{2i}\left(\int(\varphi_{\rho})^2d\rho-\int(\varphi_{\bar{\rho}})^2d\bar{\rho}\right)
\end{equation}
\begin{equation}
\varphi(\rho)=\varphi(\rho)+\varphi(\bar{\rho})
\end{equation}
Now if we write $ \rho=\rho_{1}+i\rho_{2} $. Then one can easily check that $\rho_{1}$ and $ \rho_{2} $ are isothermal, i.e., 
\begin{center}
$ \vert X_{\rho_{1}} \vert_{L}=\vert X_{\rho_{2}} \vert_{L} $ and $ \langle X_{\rho_{1}}, X_{\rho_{2}}\rangle_{L}=0$
\end{center}
where $X=(x,y,\varphi)$ and $\langle,\rangle_{L}$ is the Lorentzian norm.
 Since the coordinate system $ \rho $ is related to the coordinate system $ \zeta $ by a holomorphic map F, the coordinate system $ \zeta=\zeta_1+i\zeta_2 $ is also isothermal.
Also the expression for unit normal to the maximal surface depends only on $ \varphi_{\rho} $, as we have

$$ N=\dfrac{X_{\rho_{1}} \times_{L} X_{\rho_{2}}}{\vert X_{\rho_{1}} \times_{L} X_{\rho_{2}} \vert_{L}}=\left(\frac{2\Re(\varphi_{\rho})}{1-\vert\varphi_{\rho}\vert^{2}}, \frac{2\Im(\varphi_{\rho})}{1-\vert\varphi_{\rho}\vert^{2}},-\frac{1+\vert\varphi_{\rho}\vert^{2}}{1-\vert\varphi_{\rho}\vert^{2}}\right). $$

So geometrically $ \varphi_{\rho} $ represents  the stereographic projection of the Gauss map.

\section{Examples} 

\textit{Lorentzian Catenoid:}\cite{kobamax} Consider 
\begin{align}\label{catenoid} 
\varphi(x,y)=\sinh^{-1}(\sqrt{x^2+y^2})=\sinh^{-1}(\sqrt{z\bar{z}}) 
\end{align} which is a maximal surface in the Lorentz-Minkowski space whose Gauss map is one-one. Then
\begin{align*}
\varphi_z=\dfrac{\bar{z}}{2|z|\sqrt{|z|^2+1}} ~~~\text{,}~~~ \varphi_{\bar{z}}=\dfrac{z}{2|z|\sqrt{|z|^2+1}} 
\end{align*} 
Recall \eqref{co} and \eqref{uv}, we get
\begin{align}\label{zz}
\frac{u}{v}=\frac{z}{\bar{z}}=\frac{\zeta}{\bar{\zeta}}.
\end{align}
Next we have
\begin{align}\label{e1}
\dfrac{\zeta}{1+\zeta\bar{\zeta}}=\dfrac{z}{2|z|\sqrt{|z|^2+1}},
\end{align}
we use this equation to obtain $z$ in terms of $ \zeta $ and then from this we get to know the single holomorphic function $ F(\zeta) $. Squaring both the sides of equation \eqref{e1} and using the relations \eqref{zz} in between, we get
\begin{align*}
z^2=\left(\dfrac{1}{2}\left(\dfrac{1}{\bar{\zeta}}-\zeta\right)\right)^2
\end{align*}
taking positive square root
\begin{align*}
z=\dfrac{1}{2}\left(\dfrac{1}{\bar{\zeta}}-\zeta\right).
\end{align*} 

Comparing this with \eqref{zF}, we obtain $ \overline{F({\zeta})}=\dfrac{1}{2\bar{\zeta}} $, so 
  we have $ F(\zeta)=\dfrac{1}{2\zeta}.$ Therefore we can compute the Weierstrass data as $ M(\zeta)=F'(\zeta)=\frac{-1}{2\zeta^2}$.
Now $ \varphi(\zeta,\bar{\zeta}) $ can be computed by the formula \eqref{phiF}. Infact
\begin{align}\label{cate}
 \varphi(\zeta,\bar{\zeta})=-\frac{1}{2}\log(\zeta\bar{\zeta}) 
\end{align}
$$x= -\frac{1}{2} Re\left(\zeta-\frac{1}{\zeta}\right);\; \;\;\; y= -\frac{1}{2} Im\left(\zeta+\frac{1}{\zeta}\right).$$
This is Weierstrass-Enneper representation for maximal graph Lorentzian catenoid in terms of the coordinates $(\zeta, \overline{\zeta})$.
Next we write $(x, y, \varphi) $ in terms of hodographic coordinates $ (\rho, \overline{\rho}) $ \eqref{hodo}.
\begin{align}\label{hodophi}
\varphi(\rho,\bar{\rho})=\frac{1}{2}(\log(2\rho)+\log(2\bar{\rho}))
\end{align}
$$x= -\frac{1}{2} Re\left(\frac{1}{2\rho}-2\rho\right);\; \;\;\; y= -\frac{1}{2} Im\left(\frac{1}{2\rho}+2\rho\right).$$
\textit{Lorentzian Helicoid}\cite{kobamax}\footnote{Plane and Helicoid are the only maximal surfaces in Lorentz-Minkowski space which are also minimal surfaces in Euclidean space.}: Consider 
\begin{align}
\varphi(x,y)=\frac{\pi}{2}+\tan^{-1}\left(\frac{y}{x}\right)=\frac{\pi}{2}+\tan^{-1}\left(\frac{1}{i}\left(\frac{z-\bar{z}}{z+\bar{z}})\right)\right).
\end{align}
Then 
\begin{align}\label{iz}
u=\varphi_{\bar{z}}=\frac{i}{2\bar{z}}  ~~~\text{and}~~~ v=\varphi_{z}=\frac{-i}{2z} 
\end{align}
Again we have
\begin{align}
\frac{u}{v}=\frac{{-z}}{\bar{z}}=\frac{\zeta}{\bar{\zeta}}.
\end{align}
Using relations \eqref{uv},  \eqref{zF} and \eqref{iz}, we found 
\begin{align}
\overline{F({\zeta})}=\frac{-i}{2\bar{\zeta}} ~~~\text{and}~~~ F(\zeta)=\frac{i}{2\zeta}
\end{align}
and hence Weierstrass data $ M(\zeta)=F'(\zeta)=\frac{-i}{2\zeta^2} $.
Therefore 
\begin{align}
\varphi(\zeta,\bar{\zeta})=-\frac{i}{2}\log\left(\frac{{\zeta}}{\overline{\zeta}}\right)
\end{align}
$$x= \frac{1}{2}Im\left(\zeta-\frac{1}{\zeta}\right);\;\;\; y= -\frac{1}{2}Re\left(\zeta+\frac{1}{\zeta}\right).$$

Now again we write $(x, y, \phi)$ in terms of hodographic coordinates $\rho$ and $\bar{\rho}$ as
$$x= \frac{1}{2} Im \left(\frac{i}{2\rho}-\frac{2\rho}{i}\right);\;\; y=- \frac{1}{2}Re\left(\frac{i}{2\rho}+\frac{2\rho}{i}\right)$$
$$\phi= \frac{-i}{2}\log\left(\frac{-\bar{\rho}}{\rho}\right).$$
\section{One parameter family of isometric maximal surfaces}

In the previous section of examples we also computed the Weierstrass data $ M_{c}(\zeta)=\dfrac{-1}{2\zeta^2} $ for Lorentzian catenoid and $ M_{h}(\zeta)=\dfrac{-i}{2\zeta^2} $ for Lorentzian helicoid. Now if we define $ \forall$ $ \theta , $ such that $ 0\leq\theta\leq\frac{\pi}{2} $
\begin{align}
M_{\theta}(\zeta)=e^{i\theta}M(\zeta)~~~\text{, where}~~~ M(\zeta)=\dfrac{-1}{2\zeta^2}
\end{align}
then $ M_{\theta}(\zeta) $ becomes the Weierstrass data for a maximal surface which can be obtained using Theorem \ref{thm}. 
In particular, when $ \theta=0 $,$ M_{0}(\zeta)=M_{c}(\zeta) $, we get back Lorentzian catenoid and when $ \theta=\frac{\pi}{2} $, $ M_{\frac{\pi}{2}}(\zeta)=M_{h}(\zeta) $, we get the Lorentzian helicoid. Next recall the expression for the metric \eqref{metric},  here we see that the metric depends only on the modulus of Weierstrass data $ M(\zeta) $, so if we replace $ M(\zeta) $ by $ e^{i\theta}M(\zeta) $ in the expression of the metric, the form of the metric remains unchanged because $ |M(\zeta)|=|e^{i\theta}M(\zeta)|. $ This tells us that by varying $ \theta $, we get a one parameter family of isometric maximal surfaces in Lorentz-Minkowski space.\\
In general, if one starts with a Weierstrass data for a  given maximal surface, one can construct a one parameter family of isometric maximal surfaces, by following the procedure described in previous paragraph, starting from the given surface.\\

\textbf{Acknowledgement}\\

I would like to thank my advisor Dr. Rukmini Dey for useful discussions.


\begin{thebibliography}{99}

\bibitem{barbi} Barbishov, B.M. and N.A.Chernikov :\textit{Solution of the two plane wave scattering problem in a nonlinear scalar field theory of the Born-Infeld type.}, Soviet Physics J.E.T.P. 24, 437-442 (1966).
\bibitem{rukmini} Dey, R. :\textit{The Weierstrass-Enneper representation using hodographic coordinates on a minimal surface}, Proc. Indian Acad. Sci.(Math.Sci.) Vol.113, No. 2, May 2003, pp.189-193.
\bibitem{kobamax}Kobayashi, O. :\textit{Maximal surfaces in the 3-dimensional Minkowski space}, Tokyo J. Math., Vol.6, No.2, (1983).
\bibitem{kobasingu}Kobayashi, O. :\textit{Maximal surfaces with conelike singularities}, J. Math. Soc. Japan, Vol.36, No.4, (1984).
\bibitem{marsden} Marsden, J.E., Tipler, F.J. :\textit{Maximal hypersurfaces and foliations of constant mean curvature in general relativity}, Phys.Rep.66(3), 109-139(1980).
\bibitem{osser} Osserman, R. : \textit{Survey of minimal surfaces}, Dover Publications, New York, 1986.
\bibitem{whitham} Whitham, G.B. : \textit{Linear and Nonlinear Waves} (2nd ed.), John Wiley and Sons, 1999.
\end{thebibliography}
\end{document}